\theoremstyle{plain}
\newtheorem{thm}{Theorem}[section]
\newtheorem{lem}[thm]{Lemma}
\newtheorem{pro}[thm]{Proposition}
\theoremstyle{definition}
\newtheorem{ex}[thm]{Example}
\numberwithin{equation}{section}
\begin{document}

\title[Solvability of integral equations with functional terms]{Nontrivial solutions of systems of perturbed Hammerstein integral equations with functional terms} 

\date{}

\author[G. Infante]{Gennaro Infante}
\address{Gennaro Infante, Dipartimento di Matematica e Informatica, Universit\`{a} della
Calabria, 87036 Arcavacata di Rende, Cosenza, Italy}%
\email{gennaro.infante@unical.it}%

\begin{abstract} 
We discuss the solvability of a fairly general class of systems of perturbed Hammerstein integral equations with functional terms that depend on several parameters. The nonlinearities and the functionals are allowed to depend on the components of the system and their derivatives. The results are applicable to systems of nonlocal second order ordinary differential equations subject to functional boundary conditions, this is illustrated in an example. Our approach is based on the classical fixed point index.
\end{abstract}

\subjclass[2010]{Primary 45G15, secondary 34B10,  47H30}

\keywords{Fixed point index, cone, system, non-trivial solution, functional boundary conditions, nonlocal differential equation}

%\dedicatory{}

\maketitle

\section{Introduction}
Nonlocal differential equations have seen recently growing attention by researchers, both in the context of ODEs and PDEs. One motivation for studying this class of equations is that nonlocal terms often occur in physical models, we refer the reader to the paper by Stan\'czy~\cite{Stanczy} for nonlocalities involving averaging processes, and to the review by Ma~\cite{ToFu} for Kirchhoff-type problems.

In the context of ODEs and radial solutions of PDEs in annular domains, a recent and very interesting paper is the one by Goodrich~\cite{Chris20}. 
Goodrich studied the existence of one \emph{positive} solution of the nonlocal
ODE
\begin{equation}\label{nlocalChris-eq}
-A\Bigl(\int_{0}^{1}|u(s)|^{q}\,ds\Bigr) u''(t)=\lambda f(t, u(t)), \ t\in (0,1),
\end{equation}
where $q\geq 1$ and $\lambda$ is a parameter, subject to the Dirichlet boundary conditions (BCs)
\begin{equation}\label{nlocalChrisbc}
u(0)=u(1).
\end{equation}
The approach in~\cite{Chris20} relies on classical fixed point index theory applied in
the cone of positive continuous functions
\begin{equation}\label{Chriscone}
\hat{K}:=\Bigl\{w\in C[0,1]: \int_{0}^{1}w(s)\,ds\geq \hat{c}_1\| w\|_{\infty},\,\,\min_{t\in [a,b]}w(t)\geq \hat{c}_2 \| w\|_{\infty}, w\geq 0\Bigr \},
\end{equation}%
where  $\|w\|_{\infty}:=\sup_{t\in [0,1]}|w(t)|$. Goodrich also studied in~\cite{Chris20} the following generalization of~\eqref{nlocalChris-eq}, namely
\begin{equation*}
-\Bigl(A\Bigl(\int_{0}^{1}|u(s)|^{q}\,ds\Bigr) \Bigr)^{\beta}u''(t)=\lambda (f(t, u(t)))^{\alpha}, \ t\in (0,1),
\end{equation*}
where $\alpha, \beta$ are positive constants, subject to~\eqref{nlocalChrisbc}.

Here we proceed in a different way; rather than studying a specific boundary value problem~(BVP),  we provide new results regarding the existence and non-existence of~\emph{non-zero} solutions of the following class of systems of integral equations with functional terms, namely 
\begin{equation}\label{per-sys-intro}
u_{i}(t)=\lambda_{i} \int_0^1 k_{i}(t,s)f_{i}(s, u(s), u'(s), w_i [u])\,ds + \sum_{j=1,2}\eta_{ij}{\gamma_{ij}}(t) h_{ij}[u],\ t\in [0,1], 
\end{equation}
where $i=1,2,\ldots,n$, $u=(u_{1},\ldots,  u_{n})$, $u'=(u_{1}',\ldots,  u_{n}')$, $f_i$ are continuous, $\gamma_{ij}$ are continuously differentiable, $h_{ij}$ and $w_i $ are suitable functionals, $\lambda_{i}$ and $\eta_{ij}$ are positive parameters.  

When dealing with systems of second order BVPs, the functional terms $w_i$ occurring in~\eqref{per-sys-intro} can be used to incorporate the nonlocalities that appear in the differential equations, while the functionals $h_{ij}$ originate directly from the~BCs.  
In the context of positive solutions, the idea of incorporating the nonlocal terms of differential equations within the nolinearities has been exploited in the case of equations by
Fija\l kowski and Przeradzki~\cite{Bogdan} and Engui\c{c}a and Sanchez~\cite{Luis}, while the case of systems of second order elliptic operators has been considered by the author~\cite{gi-jepeq, gi-BK}. 
We seek solutions of the system~\eqref{per-sys-intro} in a product of cones of a kind that differs from~\eqref{Chriscone}; in particular we work on products of cones in the space $C^1[0,1]$ where the functions are positive on a subinterval of $[0,1]$ and are allowed to change sign elsewhere, this follows the line of research initiated by the author and Webb in~\cite{gijwjiea}. We stress that ours is a larger cone than the one used by the author and Minh\'{o}s~\cite{gi-fm-17}, where some additional constrains on the growth of the derivatives are embedded within the cone, a setting not applicable to the present class of systems due to the assumptions on the kernels. As in the case of elliptic equations~\cite{gi-BK}, our approach can cover different kinds of nonlocalities in the differential equations 
and several types of BCs: local, nonlocal, linear and nonlinear. There exists a wide literature on nonlocal/nonlinear BCs, we refer the reader to the papers~\cite{genupa, Chris-bj} and references therein.

The proof of the existence result relies on the classical fixed point index, while for the non-existence we use an elementary argument. We conclude by illustrating, in an example, how our theoretical results can be applied to a system of nonlocal second order ODEs that presents coupling between the components of the system in the nonlocal terms occurring in the equations and in the BCs.

\section{Existence and nonexistence of nontrivial solutions}
We discuss the solvability of the system of perturbed integral equations of the type
\begin{equation}\label{perhamm-sys-intro}
u_{i}(t)=F_i(u)(t) + \sum_{j=1,2}\eta_{ij}{\gamma_{ij}}(t) h_{ij}[u],\ t\in [0,1], \ i=1,2,\ldots,n,
\end{equation}
where $$F_i(u)(t):=\lambda_{i} \int_0^1 k_{i}(t,s)f_{i}(s, u(s), u'(s), w_i [u])\,ds,$$
$u=(u_{1},\ldots,  u_{n})$, $u'=(u_{1}',\ldots,  u_{n}')$. We make the following assumptions on the terms that occur in~\eqref{perhamm-sys-intro}.
\begin{itemize}
\item[$(C_1)$] For every $i=1,\ldots,n$,  $k_i:[0,1] \times[0,1]\rightarrow \mathbb{R}$ is 
measurable in $s$ for every $t$ and continuous in $t$ for almost every  (a.e.)~$s$,
that is, for every $\tau\in [0,1] $ we have
\begin{equation*}
\lim_{t \to \tau} |k_i(t,s)-k_i(\tau,s)|=0 \ \text{for a.e.}\ s \in [0,1].
\end{equation*}
\item [$(C_2)$]
There exist a subinterval $[a_i,b_i] \subseteq [0,1]$, a constant $\tilde{c}_i=\tilde{c}_i(a_i,b_i) \in (0,1]$ and a function $\Phi_{i0} \in L^{1}(0,1)$ such that
$|k_i(t,s)|\leq \Phi_{i0}(s)$ for $t \in [0,1]$ and a.e.~$s\in [0,1]$ and
$$
k_i(t,s) \geq \tilde{c}_i\Phi_{i0}(s) \text{ for } t\in [a_i,b_i] \text{ and a.\,e. }s \in [0,1].
$$
\item[$(C_3)$] For every $i=1,\ldots,n$, $\dfrac{\partial k_{i}}{\partial t}$ is measurable in $s$ for every $t$, continuous in $t$ except possibly at the point $t=s$ where there can be a jump discontinuity, that is, right and left limits both exist, 
and there exists $\Phi_{i1}(s) \in L^1(0, 1)$ such that $\Bigl|\dfrac{\partial k_{i}}{\partial t}(t,s)\Bigr|\le \Phi_{i1}(s)$ for $t \in [0,1]$ and a.e. $s \in [0,1]$.

\item[$(C_4)$] For every $i=1,\ldots,n$, $f_i:[0,1]\times \mathbb{R}^{2n}\times [0,+\infty)\to [0,+\infty)$ is continuous.

\item[$(C_5)$] For every $i=1,\ldots,n$ and $j=1,2$, we have $\gamma_{ij} \in C^{1} [0,1] $ and there exists a constant $c_{ij}=c_{ij}(a_i,b_i)\in (0,1]$ such that $\gamma_{ij}(t)\geq c_{ij} \|\gamma_{ij}\|_{\infty}$  for every $t\in [a_i,b_i]$.

\item[$(C_6)$] For every $i=1,\ldots,n$ and $j=1,2$, we have $\lambda_i, \eta_{ij},  \in [0,+\infty)$.
\end{itemize}
We work in the product space 
$ \displaystyle\prod_{i=1}^n C^{1}[0,1]$ endowed with the norm 
$$\|u\|:=\max_{i=1,\ldots,n}\{\|u_i\|_{C^{1}}\},$$
where $\|u_i\|_{C^{1}}:=\displaystyle \max \{ \|u_i\|_\infty, \|u_i'\|_\infty \}$. 
We recall that a cone $\mathfrak{C}$ of a real Banach space $X$ is a closed set with $\mathfrak{C}+\mathfrak{C}\subset \mathfrak{C}$, $\mu \mathfrak{C}\subset \mathfrak{C}$ for all $\mu\ge 0$ and $\mathfrak{C}\cap(-\mathfrak{C})=\{0\}$. Here we utilize the cone $K \subset \displaystyle\prod_{i=1}^n C^{1}[0,1]$ defined by 
$$K:=\bigl \{u\in \prod_{i=1}^n \tilde{K}_i \bigr \},$$
where
\begin{equation*}
\tilde{K_{i}}:=\{w\in C^{1}[0,1]:\,\,\min_{t\in [a_{i},b_{i}]}w(t)\geq
{c_{i}}\| w\|_{\infty}\},
\end{equation*}%
here $c_{i}=\min \{\tilde{c}_{i},c_{i1},c_{i2}\}$. Note that $K\neq \{0\}$ since $\hat{1}\in K$, here  $\hat{1}$ denotes the function with each component constant and equal to 1 for every $t\in [0,1]$.
We require  the nonlinear functionals $h_{ij}$ and $w_i$ to act positively on the cone $K$ and to be compact, that is:
\begin{itemize}
\item[$(C_7)$] For every $i=1,\ldots,n$ and $j=1,2$, $h_{ij}: K \to [0,+\infty)$ is continuous and maps bounded sets into bounded sets.
\end{itemize}

\begin{itemize}
\item[$(C_8)$] For every $i=1,\ldots,n,$ $w_i: K \to [0,+\infty)$ is continuous and maps bounded sets into bounded sets.
\end{itemize}
We define the operator $T$ as
\begin{equation*}
T u:=\bigl(T_i u\bigr)_{i=1\ldots n},
\end{equation*}
where
\begin{equation*}
T_i(u)(t)=F_i(u)(t) + \sum_{j=1,2}\eta_{ij}{\gamma_{ij}}(t) h_{ij}[u],\ t\in [0,1], \ i=1,2,\ldots,n.
\end{equation*}
With the assumptions above, it routine to show that $T$ maps $K$ to $K$ and is compact.

The next result summarizes the main properties of the classical fixed point index for compact maps, for more details we refer the reader to~\cite{Amann-rev, guolak}. In what follows the closure and the boundary of subsets of a cone $K$ are understood to be relative to $K$.

\begin{pro}
Let $X$ be a real Banach space and let $\mathfrak{C}\subset X$ be a cone. Let $D$ be an open bounded set of $X$ with $0\in D_{\mathfrak{C}}$ and
$\overline{D}_{\mathfrak{C}}\ne \mathfrak{C}$, where $D_{\mathfrak{C}}=D\cap \mathfrak{C}$.
Assume that $\mathfrak{T}:\overline{D}_{\mathfrak{C}}\to \mathfrak{C}$ is a compact operator such that
$x\neq \mathfrak{T}x$ for $x\in \partial D_{\mathfrak{C}}$. Then the fixed point index
 $i_{\mathfrak{C}}(\mathfrak{T}, D_{\mathfrak{C}})$ has the following properties:

\begin{itemize}

\item[$(i)$] If there exists $e\in \mathfrak{C}\setminus \{0\}$
such that $x\neq \mathfrak{T}x+\lambda e$ for all $x\in \partial D_{\mathfrak{C}}$ and all
$\lambda>0$, then $i_{\mathfrak{C}}(\mathfrak{T}, D_{\mathfrak{C}})=0$.

\item[$(iii)$] If $\mathfrak{T}x \neq \lambda x$ for all $x\in
\partial D_{\mathfrak{C}}$ and all $\lambda > 1$, then $i_{\mathfrak{C}}(\mathfrak{T}, D_{\mathfrak{C}})=1$.

\item[(iv)] Let $D^{1}$ be open bounded in $X$ such that
$\overline{D^{1}_{\mathfrak{C}}}\subset D_{\mathfrak{C}}$. If $i_{\mathfrak{C}}(\mathfrak{T}, D_{\mathfrak{C}})=1$ and $i_{\mathfrak{C}}(\mathfrak{T},
D_{\mathfrak{C}}^{1})=0$, then $\mathfrak{T}$ has a fixed point in $D_{\mathfrak{C}}\setminus
\overline{D_{\mathfrak{C}}^{1}}$. The same holds if
$i_{\mathfrak{C}}(\mathfrak{T}, D_{\mathfrak{C}})=0$ and $i_{\mathfrak{C}}(\mathfrak{T}, D_{\mathfrak{C}}^{1})=1$.
\end{itemize}
\end{pro}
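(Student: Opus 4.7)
My plan is to derive the three items from the standard axioms of the fixed point index for compact maps on a cone — normalization, additivity, homotopy invariance, and the solution property — which themselves are obtained from the Leray--Schauder degree after retracting the ambient space $X$ onto $\mathfrak{C}$ via Dugundji's extension theorem. Each item then reduces to choosing an admissible homotopy and reading off the index at a trivial endpoint. I would begin the write-up by briefly recalling this construction so that the homotopy arguments below take place on $\overline{D}_{\mathfrak{C}}$.

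For $(i)$, I would consider the homotopy $H(x,\mu)=\mathfrak{T}x+\mu M e$ on $\overline{D}_{\mathfrak{C}}\times [0,1]$, where $M$ is chosen larger than $\sup\{\|x-\mathfrak{T}x\|:x\in \overline{D}_{\mathfrak{C}}\}/\|e\|$; this supremum is finite because $D$ is bounded and $\mathfrak{T}$ is compact. The hypothesis rules out fixed points of $H(\cdot,\mu)$ on $\partial D_{\mathfrak{C}}$ for every $\mu\in (0,1]$ (taking $\lambda=\mu M$), and the standing assumption that $x\neq \mathfrak{T}x$ on $\partial D_{\mathfrak{C}}$ handles $\mu=0$; the choice of $M$ moreover forces $H(\cdot,1)$ to be fixed-point-free in the whole of $\overline{D}_{\mathfrak{C}}$. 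Homotopy invariance together with the solution property then gives $i_{\mathfrak{C}}(\mathfrak{T},D_{\mathfrak{C}})=i_{\mathfrak{C}}(\mathfrak{T}+Me,D_{\mathfrak{C}})=0$.

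For $(iii)$, I would use the straight-line homotopy $H(x,\mu)=\mu \mathfrak{T}x$, $\mu\in [0,1]$. A boundary fixed point at $\mu\in (0,1)$ would satisfy $\mathfrak{T}x=(1/\mu)x$ with $1/\mu>1$, contradicting the hypothesis, while $\mu=1$ is ruled out by the standing admissibility $x\neq \mathfrak{T}x$. At $\mu=0$ the map is identically zero and its index on any open bounded neighbourhood of $0\in \mathfrak{C}$ equals $1$ by normalization; homotopy invariance then yields $i_{\mathfrak{C}}(\mathfrak{T},D_{\mathfrak{C}})=1$. For $(iv)$, the additivity property of the index applied to the disjoint decomposition $D_{\mathfrak{C}}=D_{\mathfrak{C}}^{1}\cup (D_{\mathfrak{C}}\setminus \overline{D_{\mathfrak{C}}^{1}})$ yields
\[
i_{\mathfrak{C}}(\mathfrak{T},D_{\mathfrak{C}})=i_{\mathfrak{C}}(\mathfrak{T},D_{\mathfrak{C}}^{1})+i_{\mathfrak{C}}(\mathfrak{T},D_{\mathfrak{C}}\setminus \overline{D_{\mathfrak{C}}^{1}}),
\]
once one checks, as implicit in the definedness of $i_{\mathfrak{C}}(\mathfrak{T},D_{\mathfrak{C}}^{1})$, that $\mathfrak{T}$ has no fixed point on $\partial D_{\mathfrak{C}}^{1}$. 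In either of the two scenarios the annular index equals $\pm 1$, so the solution property produces the claimed fixed point in $D_{\mathfrak{C}}\setminus \overline{D_{\mathfrak{C}}^{1}}$.

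The main obstacle is not conceptual but bookkeeping: one has to verify that each homotopy actually sends $\overline{D}_{\mathfrak{C}}$ into $\mathfrak{C}$ and that admissibility persists on $\partial D_{\mathfrak{C}}$ for every homotopy parameter. Once the Dugundji retraction onto $\mathfrak{C}$ is in place and each boundary exclusion is translated into a condition on the appropriate parameter $\mu$, all three conclusions follow immediately from the four Leray--Schauder axioms.
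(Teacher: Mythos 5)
The paper does not prove this proposition: it is stated as a summary of the classical properties of the fixed point index, with the reader referred to Amann's survey and to Guo--Lakshmikantham for details, so there is no in-paper argument to compare against. Your derivation is the standard one found in those references and is correct. For $(i)$, the homotopy $\mathfrak{T}x+\mu Me$ with $M\|e\|>\sup_{x\in\overline{D}_{\mathfrak{C}}}\|x-\mathfrak{T}x\|$ (finite since $D$ is bounded and $\mathfrak{T}$ is compact) is admissible on $\partial D_{\mathfrak{C}}$ by the hypothesis with $\lambda=\mu M>0$, is covered at $\mu=0$ by the standing assumption, and is fixed-point free on all of $\overline{D}_{\mathfrak{C}}$ at $\mu=1$, so homotopy invariance and the solution property give index $0$. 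For $(iii)$, the linear homotopy $\mu\mathfrak{T}x$ is admissible because a boundary fixed point for $\mu\in(0,1)$ would give $\mathfrak{T}x=(1/\mu)x$ with $1/\mu>1$, and normalization at the constant map $0\in D_{\mathfrak{C}}$ gives index $1$; the only detail left tacit is that the $\mu=0$ endpoint is admissible because $0\notin\partial D_{\mathfrak{C}}$, which follows from $0\in D_{\mathfrak{C}}$ with $D$ open. Both homotopies take values in $\mathfrak{C}$ since $\mathfrak{C}+\mathfrak{C}\subset\mathfrak{C}$ and $\mu\mathfrak{C}\subset\mathfrak{C}$ for $\mu\geq 0$. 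Item $(iv)$ is exactly additivity plus the solution property, and you correctly observe that the required absence of fixed points on $\partial D^{1}_{\mathfrak{C}}$ is implicit in the assumption that $i_{\mathfrak{C}}(\mathfrak{T},D^{1}_{\mathfrak{C}})$ is defined; the annular index is then $\pm 1\neq 0$, yielding the fixed point. In short, the proposal supplies a correct proof of a result the paper deliberately leaves to the literature.
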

For $\rho\in (0,\infty)$, we define the set
$$
K_{\rho}:=\{u\in K: \|u\|<\rho\}
$$
and the quantities
$$\underline{w}_{i,\rho}:=\inf_{u\in \partial K_{\rho}} w_{i}[u], \ \overline{w}_{i,\rho}:=\sup_{u\in \partial K_{\rho}} w_{i}[u],\ \underline{h}_{ij,\rho}:=\inf_{u\in \partial K_{\rho}} h_{ij}[u],\ \overline{h}_{ij,\rho}:=\sup_{u\in \partial K_{\rho}} h_{ij}[u].
$$

\begin{lem} \label{ind1}
Assume that $u\neq Tu$ on $\partial K_{\rho}$ and suppose that
\begin{enumerate}
\item[$(\mathrm{I}^{1}_{\rho})$]
there exists
 $\rho>0$, such that  
\begin{equation}\label{eqin1}
 \max_{\substack{i=1,\ldots, n\\ l=0,1}}\Bigl\{  \frac{\lambda_{i}\overline{f}_{i,\rho}}{m_{il}}+\sum_{j=1,2}\eta_{ij}\|{\gamma_{ij}^{(l)}}\|_{\infty}\overline{h}_{ij,\rho} \Bigr \}\leq \rho,
\end{equation}
 where 
 $$
\overline{f}_{i,\rho}:=\max_{I_{i,\rho}}f_i(t, x_{1},\ldots, x_{2n},w),\ I_{i,\rho}:=[0,1]\times  [-\rho,\rho]^{2n}\times [\underline{w}_{i,\rho}, \overline{w}_{i,\rho}], $$
$$\frac{1}{m_{il}}:=\begin{cases}\displaystyle \sup_{t\in[0,1]}\int_0^1 |k_i(t,s)|\,ds,\ l=0,\\
\displaystyle \sup_{t\in[0,1]}\int_0^1 \Bigl|\frac{\partial k_{i}}{\partial t}(t,s)\Bigr|\,ds,\ l=1. 
\end{cases}.
$$
\end{enumerate}
Then we have $i_{K}(T, K_{\rho})=1$.
\end{lem}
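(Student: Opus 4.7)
My plan is to invoke property $(iii)$ of the fixed point index from the stated proposition, so it suffices to verify that $Tu \neq \mu u$ for every $u \in \partial K_\rho$ and every $\mu > 1$.

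I would argue by contradiction: suppose $u \in \partial K_\rho$ (so $\|u\|=\rho$) and $\mu>1$ satisfy $Tu = \mu u$. Since $u$ lies in the product cone, the components and their derivatives satisfy $u_i(s), u_i'(s) \in [-\rho,\rho]$ for every $s \in [0,1]$, and by definition of $\underline{w}_{i,\rho}, \overline{w}_{i,\rho}$ we have $w_i[u] \in [\underline{w}_{i,\rho},\overline{w}_{i,\rho}]$. Hence
$(s,u(s),u'(s),w_i[u])\in I_{i,\rho}$, so $f_i(s,u(s),u'(s),w_i[u]) \leq \overline{f}_{i,\rho}$ by $(C_4)$ and the definition of $\overline{f}_{i,\rho}$. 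Similarly $h_{ij}[u] \leq \overline{h}_{ij,\rho}$.

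The key step is then to bound $\|T_i u\|_{C^1}$ by estimating the sup-norm of $T_i u$ and of $(T_i u)'$ separately. For the function itself, using $(C_2)$ and $(C_5)$,
\begin{equation*}
|T_i u(t)| \leq \lambda_i \int_0^1 |k_i(t,s)| \, \overline{f}_{i,\rho}\, ds + \sum_{j=1,2} \eta_{ij} \|\gamma_{ij}\|_\infty \overline{h}_{ij,\rho} \leq \frac{\lambda_i \overline{f}_{i,\rho}}{m_{i0}} + \sum_{j=1,2} \eta_{ij} \|\gamma_{ij}\|_\infty \overline{h}_{ij,\rho}.
\end{equation*}
For the derivative, differentiating under the integral (justified by $(C_3)$ since the possible jump occurs on a null set) and using $(C_3)$, $(C_5)$,
\begin{equation*}
|(T_i u)'(t)| \leq \lambda_i \int_0^1 \Bigl|\tfrac{\partial k_i}{\partial t}(t,s)\Bigr| \overline{f}_{i,\rho}\, ds + \sum_{j=1,2} \eta_{ij} \|\gamma_{ij}'\|_\infty \overline{h}_{ij,\rho} \leq \frac{\lambda_i \overline{f}_{i,\rho}}{m_{i1}} + \sum_{j=1,2} \eta_{ij} \|\gamma_{ij}'\|_\infty \overline{h}_{ij,\rho}.
\end{equation*}
Taking the maximum over $l=0,1$ and then over $i$, and invoking hypothesis $(\mathrm{I}^1_\rho)$, I get $\|Tu\| \leq \rho$. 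On the other hand $\|Tu\|=\mu\|u\|=\mu\rho>\rho$, a contradiction.

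This establishes the non-vanishing condition needed for property $(iii)$ and hence $i_K(T,K_\rho)=1$. The only mild subtlety, rather than an obstacle, is ensuring the $C^1$-norm is controlled, which is exactly why the hypothesis takes the maximum over $l=0,1$ and why assumption $(C_3)$ on $\partial k_i/\partial t$ is needed; everything else is a direct application of the listed bounds.
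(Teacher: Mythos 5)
Your proof is correct and follows essentially the same route as the paper: both verify the hypothesis of property $(iii)$ of the fixed point index by contradiction, using the bounds $|k_i(t,s)|\le\Phi_{i0}(s)$, $|\partial k_i/\partial t|\le\Phi_{i1}(s)$ and the definitions of $\overline{f}_{i,\rho}$, $\overline{h}_{ij,\rho}$ to control both $T_iu$ and $(T_iu)'$. The only cosmetic difference is that the paper singles out the index $i_0$ and the component (function or derivative) where the norm $\rho$ is attained and deduces $\sigma\le 1$ there, whereas you bound $\|Tu\|\le\rho$ globally and compare with $\|Tu\|=\mu\rho$; the estimates involved are identical.
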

 \begin{proof}
We prove that 
 $
\sigma  u\neq Tu\ \text{for every}\ u\in \partial K_{\rho}\
\text{and every}\  \sigma >1.
$
If this does not hold, then there exist $u\in \partial K_{\rho}$ and $\sigma >1$ such that $\sigma  u= Tu$. 
Note that if $\| u\| = \rho$ then there exist $i_0 \in \{1,\dots n\}$ such that either 
$\| u_{i_0}\|_{\infty} = \rho$ or $\| u_{i_0}'\|_{\infty} = \rho$. 

We show the case $\| u_{i_0}'\|_{\infty} = \rho$, the case $\| u_{i_0}\|_{\infty} = \rho$ can be treated with a similar argument.
For $t\in [0,1]$ we have
\begin{equation}\label{ineq1}
\begin{aligned}
\sigma u_{i_0}'(t)=&\lambda_{i_0} \int_0^1  \frac{\partial k_{i_0}}{\partial t}(t,s) f_{i_0} (s, u(s), u'(s), w_{i_0} [u])\,ds+ \sum_{j=1,2}\eta_{i_0j}{\gamma_{i_0j}'}(t) h_{i_0j}[u].
\end{aligned}
\end{equation}
From~\eqref{ineq1} we obtain, for $t\in [0,1]$,
\begin{multline}\label{ineq2}
\sigma |u_{i_0}'(t)|\leq \lambda_{i_0} \int_0^1 \Bigl| \frac{\partial k_{i_0}}{\partial t}(t,s)\Bigr| f_{i_0} (s, u(s), u'(s), w_{i_0} [u])\,ds+\sum_{j=1,2}\eta_{i_0j}|{\gamma_{i_0j}'}(t)| h_{i_0j}[u]\\ \leq \lambda_{i_0}\overline{f}_{i_0,\rho} \frac{1}{m_{{i_0}1}}+\sum_{j=1,2}\eta_{i_0j}\|{\gamma_{i_0j}'}\|_{\infty}\overline{h}_{i_0j,\rho}\leq \rho.
\end{multline}
Taking in~\eqref{ineq2} the supremum for $t\in [0,1]$ yields  $\sigma \leq 1$, a contradiction. Therefore we obtain $i_{K}(T, K_{\rho})=1.$
 \end{proof}

\begin{lem} \label{ind01}
Assume that $u\neq Tu$ on $\partial K_{\rho}$ and that
\begin{enumerate}
\item[$(\mathrm{I}^{0}_{\rho})$]
there exists
 $\rho>0$ and $\tilde{\delta_{i}},\delta_{i1},\delta_{i2} \in [0,+\infty),$ such that for every $i\in \{1,\dots, n\}$ we have
 $$
 f_i(t, x_{1},\ldots, x_{2n},w)\geq \tilde{\delta_{i}} x_{i},\ \text{on}\ [a_i,b_i]\times \displaystyle\prod_{j=1}^{2n} [\theta_j\rho,\rho]\times [\underline{w}_{i,\rho}, \overline{w}_{i,\rho}],
 $$
 $$
{h}_{ij,\rho}[u]\geq \delta_{ij}\|u_i\|_{\infty},\ \text{for every}\ u \in \partial K_{\rho}\ \text{and}\  j=1,2,
 $$
 and 
\begin{equation}
 \min_{\substack{i=1,\ldots, n}}\Bigl\{ \lambda_{i} \tilde{\delta_{i}}\tilde{c}_{i}\frac{1}{M_{i}} 
+ \sum_{j=1,2}\eta_{ij}c_{ij} \delta_{ij}\|\gamma_{ij}\|_{\infty} \Bigr \}\geq 1,
\end{equation}
 where 
 $$
\frac{1}{M_{i}}:= \inf_{t\in[a_i,b_i]}\int_{a_i}^{a_i} k_i(t,s)\,ds,\quad \theta_{j}:=\begin{cases} 
-1, &j\neq i,\\
0,& j= i.\\
\end{cases}
$$
\end{enumerate}
Then we have $i_{K}(T, K_{\rho})=0$.
\end{lem}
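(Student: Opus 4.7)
I would apply part~$(i)$ of Proposition~2.1 with $e:=\hat 1\in K\setminus\{0\}$; thus it suffices to show $u\neq Tu+\sigma\hat 1$ for every $u\in\partial K_\rho$ and every $\sigma>0$. Arguing by contradiction, suppose $u\in\partial K_\rho$ and $\sigma>0$ satisfy $u=Tu+\sigma\hat 1$. Since $\|u\|=\rho>0$, fix an index $i_0$ with $\|u_{i_0}\|_{C^1}=\rho$; automatically $\|u_{i_0}\|_\infty>0$, because otherwise $u_{i_0}\equiv 0$ would force $u_{i_0}'\equiv 0$ and contradict $\rho>0$.

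The heart of the argument is to read off a lower bound on $u_{i_0}(t)$ at points $t\in[a_{i_0},b_{i_0}]$. On this subinterval $(C_2)$ gives $k_{i_0}(t,\cdot)\ge 0$ a.e., while the cone $\tilde K_{i_0}$ gives $u_{i_0}(s)\in[0,\rho]$ for $s\in[a_{i_0},b_{i_0}]$; together with the obvious pointwise bounds $|u_j(s)|,|u_j'(s)|\le\rho$ for all $j$, this places $(s,u(s),u'(s),w_{i_0}[u])$ in the set on which the hypothesis asserts $f_{i_0}\ge\tilde\delta_{i_0}\,u_{i_0}(s)$. Restricting the integral defining $F_{i_0}$ to $[a_{i_0},b_{i_0}]$, using $(C_5)$ to replace $\gamma_{i_0 j}(t)$ by the lower bound $c_{i_0 j}\|\gamma_{i_0 j}\|_\infty$, and inserting $h_{i_0 j}[u]\ge\delta_{i_0 j}\|u_{i_0}\|_\infty$, one should reach
\[
u_{i_0}(t)\ \ge\ \Bigl[\lambda_{i_0}\tilde\delta_{i_0}\tilde c_{i_0}\tfrac{1}{M_{i_0}}+\sum_{j=1,2}\eta_{i_0 j}c_{i_0 j}\delta_{i_0 j}\|\gamma_{i_0 j}\|_\infty\Bigr]\|u_{i_0}\|_\infty+\sigma
\]
for every $t\in[a_{i_0},b_{i_0}]$. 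Hypothesis~$(\mathrm{I}^{0}_{\rho})$ then says the bracketed quantity is $\ge 1$, so $u_{i_0}(t)\ge \|u_{i_0}\|_\infty+\sigma$ on $[a_{i_0},b_{i_0}]$; taking the supremum of the left-hand side yields $\|u_{i_0}\|_\infty\ge\|u_{i_0}\|_\infty+\sigma$, contradicting $\sigma>0$.

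The main bookkeeping obstacle I foresee is producing the factor $\tilde c_{i_0}$ (rather than merely $c_{i_0}$) in front of $\lambda_{i_0}\tilde\delta_{i_0}/M_{i_0}$ in the bracketed expression. The cone directly furnishes only the weaker bound $u_{i_0}(s)\ge c_{i_0}\|u_{i_0}\|_\infty$ with $c_{i_0}\le\tilde c_{i_0}$, so the sharper constant $\tilde c_{i_0}$ must be extracted by exploiting the kernel inequality $k_{i_0}(t,s)\ge\tilde c_{i_0}\Phi_{i_0 0}(s)$ of $(C_2)$ paired with the pointwise upper bound $\Phi_{i_0 0}(s)\ge|k_{i_0}(t',s)|$, i.e.\ via a Harnack-type estimate on the kernel rather than on the solution itself, before comparing with $\inf_{t'\in[a_{i_0},b_{i_0}]}\int_{a_{i_0}}^{b_{i_0}}k_{i_0}(t',s)\,ds=\tfrac{1}{M_{i_0}}$.
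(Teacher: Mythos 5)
Your argument is essentially the paper's proof: apply property $(i)$ of the index with $e=\hat 1$, suppose $u=Tu+\sigma\hat 1$ on $\partial K_\rho$, restrict to $t\in[a_{i_0},b_{i_0}]$, drop the nonnegative part of the integral outside $[a_{i_0},b_{i_0}]$, insert the lower bounds on $f_{i_0}$, $h_{i_0j}$ and $\gamma_{i_0j}$, and contradict $\sigma>0$. The one point you flag as an obstacle --- producing $\tilde c_{i_0}$ rather than $c_{i_0}$ --- is a genuine imprecision, but the paper resolves it no differently: its proof simply writes $\lambda_{i_0}\tilde\delta_{i_0}\tilde c_{i_0}\tilde\rho/M_{i_0}$ where the cone inequality $u_{i_0}(s)\ge c_{i_0}\|u_{i_0}\|_\infty$ only delivers the constant $c_{i_0}=\min\{\tilde c_{i_0},c_{i_01},c_{i_02}\}$ (the two coincide in the paper's example, and the lemma is in any case correct with $c_{i_0}$ in place of $\tilde c_{i_0}$). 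Be aware that your proposed kernel-Harnack workaround does not actually recover the sharper constant: $k_{i_0}(t,s)\ge\tilde c_{i_0}\Phi_{i_00}(s)$ gives $\int_{a_{i_0}}^{b_{i_0}}k_{i_0}(t,s)u_{i_0}(s)\,ds\ge\tilde c_{i_0}\int_{a_{i_0}}^{b_{i_0}}\Phi_{i_00}(s)u_{i_0}(s)\,ds$, and to compare the latter with $\|u_{i_0}\|_\infty/M_{i_0}$ you must again invoke $u_{i_0}(s)\ge c_{i_0}\|u_{i_0}\|_\infty$, ending up with the product $\tilde c_{i_0}c_{i_0}$, which is weaker still; so you should simply run the cone estimate and accept the constant $c_{i_0}$.
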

 \begin{proof}
We show that 
$
u\neq Tu+\sigma \hat{1}$ for every $u\in \partial K_{\rho}$ and every $\sigma  >0.$
If not, there exists $u\in \partial K_{\rho}$ and $\sigma  >0$ such that
$
u= Tu+\sigma \hat{1}. 
$
Then there exist $i_0\in \{1,\ldots, n\}$ and $\tilde{\rho}$ such that $0< \tilde{\rho}=\| u_{i_0} \|_{\infty}\leq \|u\|=\rho $. 
For every $t\in [a_i,b_i]$ we have
\begin{multline*}
\tilde{\rho}\geq u_{i_0}(t)=\lambda_{i_0} \int_0^1 k_{i_0} (t,s) f_{i_0} (s, u(s), u'(s), w_{i_0} [u])\,ds
+ \sum_{j=1,2}\eta_{i_0j}\gamma_{i_0j}(t) h_{i_0j}[u] + \sigma\\   
\ge  \lambda_{i_0} \int_{a_i}^{b_i} k_{i_0} (t,s) \tilde{\delta}_{i_0} u_{i_0}(s)\,ds + \sum_{j=1,2}\eta_{i_0j}\gamma_{i_0j}(t) \delta_{i_0j}\|u_{i_0}\|_{\infty}  + \sigma\\
\ge  \lambda_{i_0}\tilde{\delta}_{i_0} \tilde{c}_{i_0}\tilde{\rho} \frac{1}{M_{i_0}} 
+ \sum_{j=1,2}\eta_{i_0j}c_{i_0j} \delta_{i_0j}\|\gamma_{i_0j}\|_{\infty}\tilde{\rho}+\sigma \geq  \tilde{\rho} +\sigma,
\end{multline*}
a contradiction, since $\sigma  >0$. Therefore we obtain $i_{K}(T, K_{\rho})=0.$
 \end{proof}
In the next Lemma, we restrict the growth of the nonlinearities in only one of the components.
\begin{lem} \label{ind02}
Assume that $u\neq Tu$ on $\partial K_{\rho}$ and that
\begin{enumerate}
\item[$(\mathrm{I}^{0}_{\rho})^{\star}$]
there exist
 $\rho>0$ and $i_{0}\in \{1,\dots, n\}$ such that
\begin{equation}\label{idxstar}
 \frac{\lambda_{i_0}\underline{f}_{i_0,\rho}}{M_{i_0}} 
+ \sum_{j=1,2}\eta_{i_0j}c_{i_0j}\|\gamma_{i_0j}\|_{\infty} \underline{h}_{{i_0}j,\rho}\geq \rho,
\end{equation}
 where 
 $$
\underline{f}_{i,\rho}:=\min_{J_{i,\rho}}f_i(t, x_{1},\ldots, x_{2n},w),\
J_{i,\rho}:=[a_i,b_i]\times \displaystyle\prod_{j=1}^{2n} [\theta_j\rho,\rho]\times [\underline{w}_{i,\rho}, \overline{w}_{i,\rho}].
$$
\end{enumerate}
Then we have $i_{K}(T, K_{\rho})=0$.
\end{lem}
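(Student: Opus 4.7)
The plan is to imitate the argument of Lemma~\ref{ind01}, applying property~$(i)$ of the fixed point index with $e=\hat{1}$. That is, I will establish that
\[
u\neq Tu+\sigma\hat{1}\quad\text{for every } u\in\partial K_\rho\text{ and every } \sigma>0,
\]
which directly yields $i_K(T,K_\rho)=0$. Assume, for contradiction, that some $u\in\partial K_\rho$ and $\sigma>0$ satisfy $u=Tu+\sigma\hat{1}$.

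I would then zoom in on the distinguished index $i_0$ furnished by $(\mathrm{I}^{0}_{\rho})^{\star}$. Since $u_{i_0}\in \tilde{K}_{i_0}$, one has $u_{i_0}(s)\in[0,\rho]$ for every $s\in[a_{i_0},b_{i_0}]$, while all other components of $u(s)$ and all components of $u'(s)$ lie in $[-\rho,\rho]$ on the whole of $[0,1]$. Consequently, for $s\in[a_{i_0},b_{i_0}]$, the tuple $(s,u(s),u'(s),w_{i_0}[u])$ belongs to $J_{i_0,\rho}$, and hence $f_{i_0}(s,u(s),u'(s),w_{i_0}[u])\geq \underline{f}_{i_0,\rho}$. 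This is the one place where the specific form of the intervals $[\theta_j\rho,\rho]$ is used.

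Next, for any $t\in[a_{i_0},b_{i_0}]$, I would exploit the non-negativity of $f_{i_0}$ together with $k_{i_0}(t,s)\geq \tilde{c}_{i_0}\Phi_{i_0 0}(s)\geq 0$ from $(C_2)$ to shrink the integration domain to $[a_{i_0},b_{i_0}]$, and use $(C_5)$ to bound $\gamma_{i_0 j}(t)\geq c_{i_0 j}\|\gamma_{i_0 j}\|_\infty$, obtaining
\[
u_{i_0}(t)\;\geq\;\frac{\lambda_{i_0}\underline{f}_{i_0,\rho}}{M_{i_0}}+\sum_{j=1,2}\eta_{i_0 j}c_{i_0 j}\|\gamma_{i_0 j}\|_\infty\,\underline{h}_{i_0 j,\rho}+\sigma.
\]
Inequality~\eqref{idxstar} then forces $u_{i_0}(t)\geq \rho+\sigma$ for every $t\in[a_{i_0},b_{i_0}]$; in particular $\|u_{i_0}\|_\infty\geq \rho+\sigma>\rho=\|u\|$, the desired contradiction.

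The only genuine obstacle is the bookkeeping required to place the integrand within $J_{i_0,\rho}$, which amounts to exploiting the positivity of $u_{i_0}$ on $[a_{i_0},b_{i_0}]$ encoded in the cone $\tilde{K}_{i_0}$ and noting that the cone forces no sign restriction on the remaining components or on any derivative. Once this localisation is in place, the chain of estimates mirrors the one in Lemma~\ref{ind01} and the conclusion follows immediately.
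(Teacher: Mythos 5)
Your proposal is correct and follows essentially the same route as the paper: both apply property $(i)$ of the index with $e=\hat{1}$, restrict the integral to $[a_{i_0},b_{i_0}]$ where the cone guarantees $(s,u(s),u'(s),w_{i_0}[u])\in J_{i_0,\rho}$ and the kernel is nonnegative, and then invoke \eqref{idxstar} to reach the contradiction $\rho\geq\rho+\sigma$. Your extra care in justifying why the integrand lies in $J_{i_0,\rho}$ is a welcome elaboration of a step the paper leaves implicit.
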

 \begin{proof}
We show that 
$
u\neq Tu+\sigma \hat{1}$ for every $u\in \partial K_{\rho}$ and every $\sigma  >0.$
If not, there exists $u\in \partial K_{\rho}$ and $\sigma  >0$ such that
$
u= Tu+\sigma \hat{1}. 
$
Note that $\| u_{i_0} \|_{\infty}\leq \|u\|=\rho$, therefore
for every $t\in [a_i,b_i]$ we have
\begin{multline*}
\rho \geq u_{i_0}(t)=\lambda_{i_0} \int_0^1 k_{i_0} (t,s) f_{i_0} (s, u(s), u'(s), w_{i_0} [u])\,ds
+ \sum_{j=1,2}\eta_{i_0j}\gamma_{i_0j}(t) h_{i_0j}[u] + \sigma\\   
\ge  \lambda_{i_0} \int_{a_i}^{b_i} k_{i_0} (t,s) \underline{f}_{i_0,\rho} \,ds + \sum_{j=1,2}\eta_{i_0j}\gamma_{i_0j}(t) \underline{h}_{{i_0}j,\rho} + \sigma\\
\ge  \lambda_{i_0}\underline{f}_{i_0,\rho}\frac{1}{M_{i_0}} 
+ \sum_{j=1,2}\eta_{i_0j}c_{i_0j} \underline{h}_{{i_0}j,\rho}+\sigma\geq \rho +\sigma,
\end{multline*}
a contradiction, since $\sigma  >0$. Therefore we obtain $i_{K}(T, K_{\rho})=0.$
 \end{proof}
With these ingredients we can state the following existence and localization result.
\begin{thm}\label{thmsol}
Assume that either of the following conditions holds.
\begin{enumerate}
\item[$(S)$]  There exist $\rho _{1},\rho _{2}\in (0, +\infty )$ with $\rho
_{1}<\rho _{2}$ such that
 $(\mathrm{I}_{\rho _{1}}^{0})$ and $(\mathrm{I}_{\rho _{2}}^{1})$ are satisfied.
\item[$(S)^{\star}$] There exist $\rho _{1},\rho _{2}\in (0, +\infty )$ with $\rho
_{1}<\rho _{2}$ such that $(\mathrm{I}_{\rho _{1}}^{0})^{\star}$ and $(\mathrm{I}_{\rho _{2}}^{1})$ are satisfied.
\end{enumerate}
Then the 
system~\eqref{perhamm-sys-intro} has at least one solution $u\in K$, with $\rho_1\leq \|u\|\leq \rho_2$.
\end{thm}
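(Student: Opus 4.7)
The plan is to combine the three preceding lemmas with property~$(iv)$ of the fixed point index stated in the proposition. The rough idea is that at the inner radius $\rho_1$ the index of $T$ on $K_{\rho_1}$ should be~$0$, while at the outer radius $\rho_2$ the index on $K_{\rho_2}$ should be~$1$; the index property then yields a fixed point in the annular region $K_{\rho_2}\setminus\overline{K_{\rho_1}}$, whose norm automatically satisfies $\rho_1\le \|u\|\le \rho_2$.

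Before applying the lemmas, I would first dispose of the degenerate situation in which their standing hypothesis $u\neq Tu$ on $\partial K_{\rho}$ fails. If $Tu=u$ for some $u\in \partial K_{\rho_1}\cup \partial K_{\rho_2}$, we already have a solution of the system with norm equal to $\rho_1$ or $\rho_2$, so the conclusion holds trivially. Hence we may assume throughout the rest of the argument that $u\neq Tu$ on both $\partial K_{\rho_1}$ and $\partial K_{\rho_2}$, which is exactly what is needed to invoke Lemmas~\ref{ind1}, \ref{ind01} and \ref{ind02}.

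Under hypothesis~$(S)$, Lemma~\ref{ind01} applied at $\rho=\rho_1$ gives $i_K(T,K_{\rho_1})=0$, while Lemma~\ref{ind1} applied at $\rho=\rho_2$ gives $i_K(T,K_{\rho_2})=1$. Since $\rho_1<\rho_2$, the sets $K_{\rho_1}$ and $K_{\rho_2}$ are open bounded in $\prod_{i=1}^n C^1[0,1]$ with $\overline{K_{\rho_1}}\subset K_{\rho_2}$, so property~$(iv)$ of the index yields a fixed point $u\in K_{\rho_2}\setminus \overline{K_{\rho_1}}$, i.e.\ a solution of~\eqref{perhamm-sys-intro} with $\rho_1\le \|u\|\le \rho_2$. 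Under hypothesis~$(S)^{\star}$ the reasoning is identical, except that Lemma~\ref{ind02} replaces Lemma~\ref{ind01} in producing $i_K(T,K_{\rho_1})=0$.

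I do not expect any real obstacle here: the compactness of $T$ on $K$ and the fact that $T(K)\subset K$ have already been recorded in the paragraph following the definition of $T$, and $0\in K_{\rho}$ with $\overline{K_{\rho}}\ne K$ for every $\rho>0$, so the prerequisites of the index proposition are satisfied on both $K_{\rho_1}$ and $K_{\rho_2}$. The entire proof amounts to a careful bookkeeping of which lemma supplies which index value and an appeal to property~$(iv)$; no new estimates are required beyond those already packaged into the conditions $(\mathrm{I}^{0}_{\rho_1})$, $(\mathrm{I}^{0}_{\rho_1})^{\star}$ and $(\mathrm{I}^{1}_{\rho_2})$.
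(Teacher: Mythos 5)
Your proposal is correct and follows essentially the same argument as the paper: dispose of the boundary fixed-point case first, then use Lemma~\ref{ind01} (or Lemma~\ref{ind02} for $(S)^{\star}$) and Lemma~\ref{ind1} to get indices $0$ and $1$, and conclude via property~$(iv)$ of the fixed point index. No discrepancies to report.
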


\begin{proof}
We prove the result under the assumption $(S)$, the other case is similar.
If $T$ has fixed point either on $\partial {K_{\rho _{1}}}$ or on $\partial {K_{\rho _{2}}}$ we are done. If this is not the case, by Lemma~\ref{ind01} we have $i_{K}(T, K_{\rho_1})=0$ and by Lemma~\ref{ind1} we obtain $i_{K}(T, K_{\rho_2})=1$. Therefore we have $i_{K}(T, K_{\rho_2}\setminus \overline{K}_{\rho_1})=1,$ which proves the result.
\end{proof}

We now provide a non-existence result that allows different growths in the components of the system.
\begin{thm}\label{nonexthm}
Let $\mathcal{I},\mathcal{J}\subset \{1,\ldots,n\}$ be such that $\mathcal{I}\cap \mathcal{J}=\emptyset$ and $\mathcal{I}\cup  \mathcal{J}=  \{1,\ldots,n\}$
and assume that there exists $\rho>0$ such that the following conditions are satisfied:
\begin{enumerate}
\item[$(N_{\mathcal{I}})$]
There exist $\tilde{\xi}_{i}, \xi_{i1}, \xi_{i2}\in [0,+\infty)$ such that, for every $i\in \mathcal{I}$ we have
\begin{equation*}
f_{i} (t, x_{1},\ldots, x_{2n},w)\leq \tilde{\xi}_{i} |x_{i}|,\ \text{on}\ [0,1]\times  [-\rho,\rho]^{2n}\times 
\Bigl[\inf_{u\in \overline{K}_{\rho}} w_{i}[u], \sup_{u\in \overline{K}_{\rho}} w_{i}[u]\Bigr],
\end{equation*}
\begin{equation*}
h_{ij}[u]\leq \xi_{ij}\|u_i\|_{\infty}, \ \text{for every}\ u \in \overline{K}_{\rho}\ \text{and}\  j=1,2,
\end{equation*}
\begin{equation}\label{nonexineq}
\max_{i\in \mathcal{I}}\Bigl\{ \frac{\lambda_{i}\tilde{\xi}_{i}}{m_{i0}}
+ \sum_{j=1,2}\eta_{ij}\xi_{ij}\|\gamma_{ij}\|_{\infty}\Bigr\}<1.
 \end{equation}
\item[$(N_{\mathcal{J}})$] There exist $\tilde{\delta}_{i},\delta_{i1},\delta_{i2} \in [0,+\infty),$ such that for every $i\in \mathcal{J}$ we have
 $$
 f_i(t, x_{1},\ldots, x_{2n},w)\geq \tilde{\delta_{i}} x_{i},\ \text{on}\quad [a_i,b_i]\times \displaystyle\prod_{j=1}^{2n} [\theta_j\rho,\rho]\times \Bigl[\inf_{u\in \overline{K}_{\rho}} w_{i}[u], \sup_{u\in \overline{K}_{\rho}} w_{i}[u]\Bigr],
 $$
 $$
h_{ij}[u]\geq \delta_{ij}\|u_i\|_{\infty},\ \text{for every}\ u \in \overline{K}_{\rho}\ \text{and}\  j=1,2,
 $$
 and 
\begin{equation}\label{nonexineq2}
 \min_{\substack{i\in \mathcal{J}}}\Bigl\{\frac{ \lambda_{i} \tilde{\delta_{i}}\tilde{c}_{i}}{M_{i}} 
+ \sum_{j=1,2}\eta_{ij}c_{ij} \delta_{ij}\|\gamma_{ij}\|_{\infty} \Bigr \}> 1,
\end{equation}
\end{enumerate}
Then the system~\eqref{perhamm-sys-intro} has at most the zero solution in $\overline{K}_{\rho}$. 
\end{thm}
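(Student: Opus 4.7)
The plan is to argue by contradiction: assume there exists a nonzero $u \in \overline{K}_\rho$ solving~\eqref{perhamm-sys-intro} and show that every component $u_i$ is in fact forced to vanish. Since $\{1,\ldots,n\} = \mathcal{I} \cup \mathcal{J}$ with $\mathcal{I}\cap\mathcal{J}=\emptyset$, it suffices to treat the two index sets separately, exploiting the sublinear upper bound from $(N_{\mathcal{I}})$ for $i \in \mathcal{I}$ and the superlinear lower bound from $(N_{\mathcal{J}})$ for $i \in \mathcal{J}$.

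For each $i \in \mathcal{I}$, I would start from the identity $u_i(t) = F_i(u)(t) + \sum_{j=1,2}\eta_{ij}\gamma_{ij}(t) h_{ij}[u]$, apply the majorants from $(N_{\mathcal{I}})$ together with $|k_i(t,s)| \leq \Phi_{i0}(s)$, take absolute values, and use the definition of $1/m_{i0}$ to obtain, after taking the supremum over $t\in[0,1]$,
$$\|u_i\|_\infty \leq \Bigl(\frac{\lambda_i\tilde{\xi}_i}{m_{i0}} + \sum_{j=1,2}\eta_{ij}\xi_{ij}\|\gamma_{ij}\|_\infty\Bigr)\|u_i\|_\infty.$$
By~\eqref{nonexineq} the bracketed coefficient is strictly less than $1$, which forces $\|u_i\|_\infty = 0$ and hence $u_i \equiv 0$.

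For each $i \in \mathcal{J}$, I would suppose for contradiction that $\tilde{\rho}_i := \|u_i\|_\infty > 0$; then $0 < \tilde{\rho}_i \leq \rho$. The global bound $\|u\| \leq \rho$ ensures $|u_k(s)|,|u_k'(s)| \leq \rho$ for all $k$, while the cone membership yields $u_i(s) \geq 0$ on $[a_i,b_i]$, so that $(s,u(s),u'(s),w_i[u])$ lies in the region $J_{i,\rho}$ where the lower bound on $f_i$ from $(N_{\mathcal{J}})$ is prescribed. Combining this lower bound with the cone estimates $k_i(t,s) \geq \tilde{c}_i\Phi_{i0}(s)$, $\gamma_{ij}(t) \geq c_{ij}\|\gamma_{ij}\|_\infty$ on $[a_i,b_i]$, and $u_i(s) \geq c_i\|u_i\|_\infty$, exactly as in the chain of inequalities used in Lemma~\ref{ind01}, would give, for every $t\in[a_i,b_i]$,
$$u_i(t) \geq \Bigl(\frac{\lambda_i\tilde{\delta}_i\tilde{c}_i}{M_i} + \sum_{j=1,2}\eta_{ij}c_{ij}\delta_{ij}\|\gamma_{ij}\|_\infty\Bigr)\tilde{\rho}_i.$$
Since $u_i(t) \leq \tilde{\rho}_i$, dividing by $\tilde{\rho}_i$ and comparing with~\eqref{nonexineq2} yields $1 \geq C > 1$, the desired contradiction. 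Hence $u_i \equiv 0$ for every $i \in \mathcal{J}$ as well, and combining the two cases gives $u \equiv 0$.

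The main subtlety will be verifying carefully that $(s,u(s),u'(s),w_i[u])$ does lie in $J_{i,\rho}$, so that the pointwise inequality $f_i \geq \tilde{\delta}_i u_i$ may be legitimately invoked: this uses simultaneously the a priori bound $\|u\| \leq \rho$ (which handles the $2n-1$ coordinates with $\theta_j = -1$) and the cone positivity of $u_i$ on $[a_i,b_i]$ (which handles the single coordinate with $\theta_i = 0$). Once this bookkeeping is in place, the remaining estimates are a direct recombination of the ingredients already developed in Lemmas~\ref{ind1} and~\ref{ind01}.
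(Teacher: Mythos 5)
Your proposal is correct and follows essentially the same argument as the paper: the same two chains of inequalities (the sup-norm contraction estimate for $i\in\mathcal{I}$ and the cone lower-bound estimate on $[a_i,b_i]$ for $i\in\mathcal{J}$, including the bookkeeping that places $(s,u(s),u'(s),w_i[u])$ in the region where the bound on $f_i$ holds). The only cosmetic difference is that you show each component vanishes in turn, while the paper selects a single component $i_0$ with $\|u_{i_0}\|_\infty=\tilde{\rho}>0$ and derives the contradiction according to whether $i_0\in\mathcal{I}$ or $i_0\in\mathcal{J}$; the two formulations are equivalent.
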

\begin{proof}
Assume that there exist $u\in \overline{K}_{\rho}\setminus \{0\}$ such that $Tu=u$. 
Then there exists $i_{0}\in \{1,\ldots, n\}$ and $\tilde{\rho}\in (0,\rho]$ such that $\|u_{i_{0}}\|_{\infty}=\tilde{\rho}$.

If $i_{0}\in \mathcal{I}$, then, by means of the assumptions in $(N_{\mathcal{I}})$, for every $t\in [0,1]$ we have
\begin{multline}\label{nonex-est}
|u_{i_0}(t)|\leq \lambda_{i_0} \int_0^1 |k_{i_0} (t,s)| f_{i_0} (s, u(s), u'(s), w_{i_0} [u])\,ds
+ \sum_{j=1,2}\eta_{i_0j}|\gamma_{i_0j}(t)| h_{i_0j}[u]\\   
\leq  \lambda_{i_0} \int_0^1 |k_{i_0} (t,s)| \tilde{\xi}_{i_0}|u_{i_0}(s)|\,ds
+ \sum_{j=1,2}\eta_{i_0j}|\gamma_{i_0j}(t)| \xi_{i_0j}\|u_{i_0}\|_{\infty}\\  
\leq \Bigl( \lambda_{i_0}\tilde{\xi}_{i_0} \frac{1}{m_{i_00}}
+ \sum_{j=1,2}\eta_{i_0j}\xi_{i_0j}\|\gamma_{i_0j}\|_{\infty} \Bigr )\tilde{\rho}<\tilde{\rho}.
\end{multline}
Passing to the supremum for $t\in [0,1]$ in~\eqref{nonex-est} gives $\tilde{\rho} < \tilde{\rho}$, a contradiction.

If $i_{0}\in \mathcal{J}$, then the assumptions in $(N_{\mathcal{J}})$ imply that, for every $t\in [a_i,b_i]$, we have 
\begin{multline*}
\tilde{\rho}\geq u_{i_0}(t)=\lambda_{i_0} \int_0^1 k_{i_0} (t,s) f_{i_0} (s, u(s), u'(s), w_{i_0} [u])\,ds
+ \sum_{j=1,2}\eta_{i_0j}\gamma_{i_0j}(t) h_{i_0j}[u] \\   
\ge  \lambda_{i_0} \int_{a_i}^{b_i} k_{i_0} (t,s) \tilde{\delta}_{i_0} u_{i_0}(s)\,ds + \sum_{j=1,2}\eta_{i_0j}\gamma_{i_0j}(t) \delta_{i_0j}\|u_{i_0}\|_{\infty}  \\
\ge   \Bigl(\lambda_{i_0}\tilde{\delta}_{i_0} \tilde{c}_{i_0} \frac{1}{M_{i_0}} 
+ \sum_{j=1,2}\eta_{i_0j}c_{i_0j} \delta_{i_0j}\|\gamma_{i_0j}\|_{\infty}\Bigr )\tilde{\rho} >  \tilde{\rho},
\end{multline*}
a contradiction.
\end{proof}
We conclude with the following example, which illustrates the applicability of the above results.
\begin{ex}
Consider the system
\begin{equation} \label{exmp}
\left\{
\begin{array}{cl}
-\bigl(e^{u_2(\frac{1}{2})}+\int_{0}^{1} (u_1'(t))^2\,dt\bigr) u''_1(t)=\lambda_1 e^{u_1(t)}(1+  (u_2'(t))^2),& t\in [0,1] , \\
-e^{(\int_{0}^{1}(u_1'(t) + u_2'(t))^2\,dt )}u''_2=\lambda_2 (u_2(t)u_1'(t))^2, & t\in [0,1] , \\
u_1'(0)+\eta_{11} h_{11}[(u_1,u_2)]=0,\ \frac{1}{4} u_1'(1)+u_1(\frac{1}{2})=0,\\
u_2'(0)+\eta_{21} h_{21}[(u_1,u_2)]=0,\ \frac{1}{4} u_2(\frac{1}{2})+u_2(1)=0,
\end{array}%
\right. 
\end{equation}%
where 
$$
h_{11}[(u_1,u_2)]= \bigl( u_{1}'(\frac{1}{2})\bigr)^{2}+\bigl( u_{2}'(\frac{1}{2})\bigr)^{2}\ \mbox{and} \ h_{21}[(u_1,u_2)]=(u_2(\frac{1}{2}))^2 \int_{0}^{1} (u_1'(t))^2\,dt.
$$
We investigate the existence of solutions of the system~\eqref{exmp} with norm less than or equal to~1.
The system~\eqref{exmp} can be written in the form
\begin{equation*}
\left\{
\begin{array}{l}
u_1(t)=\eta_{11}\gamma_{11}(t)h_{11}[(u_1,u_2)]
+\lambda_1\int_{0}^{1}k_1(t,s) f_{1}(s, u_1(s), u_2(s), u_1'(s),u_2'(s), w_1[(u_1,u_2)])\,ds, \\
u_2(t)=\eta_{21}\gamma_{21}(t)h_{21}[(u_1,u_2)]
+\lambda_2\int_{0}^{1}k_2(t,s) f_{2}(s, u_1(s), u_2(s), u_1'(s),u_2'(s), w_1[(u_1,u_2)])\,ds, \\
\end{array}%
\right. 
\end{equation*}%
where
$$
f_{1}(t, u_1(t), u_2(t), u_1'(t),u_2'(t), w_1[(u_1,u_2)]=e^{u_1(t)}(1+  (u_2'(t))^2)w_1[(u_1,u_2)],
$$
$$
f_{2}(t, u_1(t), u_2(t), u_1'(t),u_2'(t), w_1[(u_1,u_2)]=(u_2(t)u_1'(t))^2w_2[(u_1,u_2)],
$$
$$
\gamma_{11}(t)=\frac{3}{4}- t,\ w_1[(u_1,u_2)]=\bigl(e^{u_2(\frac{1}{2})}+\int_{0}^{1} (u_1'(t))^2\,dt\bigr)^{-1},
$$
\begin{equation*}
k_1(t,s)=\frac{1}{4}+\begin{cases} \frac{1}{2}-s,\ &s\leq \frac{1}{2},\\0,\ &s>\frac{1}{2},
\end{cases}
-\begin{cases} t-s,\ &s\leq t,\\ 0,\ &s>t,
\end{cases}
\end{equation*}
$$
\gamma_{21}(t)=\Bigl(\frac{9}{10}-t \Bigr),\  w_2[(u_1,u_2)]=e^{-\int_{0}^{1}(u_1'(t) + u_2'(t))^2\,dt },
$$
\begin{equation*}
k_2(t,s)=\frac{4}{5}(1-s)+\begin{cases}
\frac{1}{5}(\frac{1}{2} -s), &  s \le \frac{1}{2}\\ \quad 0,&
s>\frac{1}{2}
\end{cases}
 - \begin{cases} t-s, &s\le t \\ \quad 0,&s>t.
\end{cases}
\end{equation*}
The kernel $k_1$ is non-negative in for $t\in [0,3/4]$ and can change sign for $t\in [3/4,1]$. The computation of the constants related to 
$k_1$ and $\gamma_{11}$ can be found
for example in \cite{genupa, gijwems} and references therein, and read as follows
$$
\Phi_{10} (s)=\|\gamma_{11}\|_{\infty}=\frac{3}{4}, \frac{1}{m_{10}}=\frac{3}{8}.
$$
The choice of $[a_1,b_1]=[0, \frac{3}{8}]$ yields $c_1=\tilde{c}_1=c_{11}=\frac{1}{3}, \frac{1}{M_1}=\frac{9}{64}$. Furthermore note that
 \begin{equation*}
\gamma_{11}'(t):=-1, \quad 
\frac{\partial k_{1}}{\partial t}(t,s)=
\begin{cases} -1,\ &s< t,\\ 0,\ &s>t,
\end{cases}
\end{equation*}
and thus we obtain
 \begin{equation*}
1=\|\gamma_{11}'\|=\Phi_{11} (s)=\frac{1}{m_{11}}.
\end{equation*}
The kernel $k_2$ is non-negative in for $t\in [0,1/2]$ but can change sign for $t\in [1/2,1]$. 
In this case $\Phi_{20}=(1-s)$ and fixing
$[a_2,b_2]=[0, \frac{1}{2}]$ gives $\tilde{c}_2=\frac{2}{5}$, see~\cite{gijwjiea}. By direct calculation we obtain 
$$
\|\gamma_{21}\|_{\infty}=\frac{9}{10},\ \frac{1}{m_{20}}=\frac{21}{40},\ c_{21}=\frac{4}{9},\ \frac{1}{M_2}=\frac{2}{5},
$$
thus we have $c_2=\tilde{c}_2=\frac{2}{5}$. Reasoning as above yields
 \begin{equation*}
1=\|\gamma_{21}'\|=\Phi_{21} (s)=\frac{1}{m_{21}}.
\end{equation*}
Note that for $(u_1,u_2)\in \overline{K}_{1}$ we have $$(1+e)^{-1}\leq w_1[(u_1,u_2)]\leq e\ \text{and}\ e^{-4}\leq w_2[(u_1,u_2)]\leq 1,$$
thus we have $$\overline{f}_{1,1}\leq 2e^2,\ \overline{f}_{2,1}\leq 1,\ \overline{h}_{11,1}\leq 2 ,\ \overline{h}_{21,1}\leq 1. 
$$
Therefore~\eqref{eqin1} is satisfied if the parameters $\lambda_{1}, \eta_{11}, \lambda_{2}, \eta_{21}$ satisfy the restriction
\begin{equation}\label{exr1}
\max \Bigl\{2(e^2\lambda_{1}+\eta_{11}), \lambda_{2}+\eta_{21} \Bigr \}\leq 1.
\end{equation}
Note that 
$
\underline{f}_{1,\rho_0}\geq e^{-\rho_0}(1+e)^{-1}
$
for $0<\rho_0<1$, therefore~\eqref{idxstar} is satified if $\lambda_1>0$ and $\rho_0$ is sufficiently small.

If the coefficients satisfy \eqref{exr1} and $\lambda_1>0$, by Theorem~\ref{thmsol} we obtain a non-zero solution in $K$ with $\|(u_1,u_2)\|\leq 1$; 
this happens for example for $\lambda_{1}=1/20, \eta_{11}=1/10, \lambda_{2}=\eta_{21}=1/2$.

Now fix $\mathcal{J}=\{1\}$ and $\mathcal{I}=\{2\}$. Observe that
$$
 f_1(t, x_{1},\ldots, x_{2n},w)\geq \frac{21}{30} x_{1},\ \text{on}\quad [0,3/4]\times \displaystyle\prod_{j=1}^{2n} [\theta_j,1]\times \bigl[(1+e)^{-1}, e\bigr],
 $$
 $$
h_{11}[u]\geq 0,\ \text{for every}\ u \in \overline{K}_{1},
 $$
thus the inequality~\eqref{nonexineq2} is satisfied for
\begin{equation}\label{ncon1}
 \lambda_{1}> \frac{640}{21}.
\end{equation}
Furthermore note that we have
\begin{equation*}
f_{2} (t, x_{1},\ldots, x_{2n},w)\leq  |x_{2}|,\ \text{on}\ [0,1]\times  [-1,1]^{2n}\times 
\Bigl[e^{-4},  1\Bigr],
\end{equation*}
\begin{equation*}
h_{21}[u]\leq \|u_2\|_{\infty}, \ \text{for every}\ u \in \overline{K}_{1},
\end{equation*}
thus the condition~\eqref{nonexineq} is satified if
\begin{equation}\label{ncon2}
\Bigl\{ \lambda_{2}\frac{21}{40}+
\eta_{21}\frac{9}{10}\Bigr\}<1.
 \end{equation}
Note that the trivial solution does not satisfy the system~\eqref{exmp}; Theorem~\ref{nonexthm} yields that the system~\eqref{exmp} has no solutions in $K$ with norm less than or equal to $1$ whenever \eqref{ncon1} and \eqref{ncon2} are satisfied; this happens for example when 
$\lambda_{1}=31, \eta_{11}=\lambda_{2}=\eta_{21}=1$.
\end{ex}

\section*{Acknowledgements}
G. Infante was partially supported by G.N.A.M.P.A. - INdAM (Italy).

\end{document}